\newcommand{\commentout}[1]{}
\newcommand{\R}{\mathbb{R}}
\newcommand {\e}  {\varepsilon}
\newcommand {\Chi} {{\bf \raise 2pt \hbox{$\chi$}} }
\newcommand {\sgn} { {\rm sgn} }
\newcommand {\F} { {\mathcal F} }
\newcommand {\U} { {\mathcal U} }
\newcommand {\f}   {\frac}
\newcommand {\p}   {\partial}
\newcommand{\dis}{\displaystyle}
\newcommand{\PrPc}{PrP$^{\text{c}}$ }
\newcommand{\PrPSc}{PrP$^{\text{Sc}}$ }
\newcommand{\beq}{\begin{equation}}
\newcommand{\beqa} {\begin{array}{rl}}
\newcommand{\eeq}{\end{equation}}
\newcommand{\eeqa}{\end{array}}
\newtheorem{theorem}{Theorem}
\newtheorem{lemma}[theorem]{Lemma}
\newtheorem{definition}[theorem]{Definition}
\newtheorem{remark}[theorem]{Remark}
\newtheorem{proposition}[theorem]{Proposition}
\title{\LARGE Global stability for the prion equation with general incidence}
\author{P. Gabriel \thanks{Universit\'e de Versailles Saint-Quentin-en-Yvelines, Laboratoire de Math\'ematiques de Versailles, CNRS UMR 8100, 45 Avenue des \'Etats-Unis, 78035 Versailles cedex, France. Email: pierre.gabriel@uvsq.fr}}
\date{\today}
\begin{document}
\maketitle
\pagestyle{plain}
\pagenumbering{arabic}

\begin{abstract}
We consider the so-called prion equation with the general incidence term introduced in~\cite{Greer2}, and we investigate the stability of the steady states.
The method is based on the reduction technique introduced in~\cite{PG2}.
The argument combines a recent spectral gap result for the growth-fragmentation equation in weighted $L^1$ spaces and the analysis of a nonlinear system of three ordinary differential equations.
\end{abstract}

\noindent{\bf Keywords:} prion equation, growth-fragmentation equation,
spectral gap, self-similarity, long-time behavior, stability.

\

\noindent{\bf AMS Class. No.} 35B35, 35B40, 35Q92, 45K05, 92D25

\

\section{Introduction}

Prion diseases, also referred to as transmissible spongiform encephalopathies, are infectious and fatal neurodegenerative diseases. They include bovine spongiform encephalopathy in cattle, scrapie in sheep, and Creutzfeld-Jakob disease in human. 
It is now widely admitted that the agent responsible for these diseases, known as prion, is a protein which has the ability to self-replicate by an autocatalytic process~\cite{Griffith, Prusiner}.
The infectious prion, called \PrPSc for Prion Protein Scrapie, is a misfolded form of a normally shaped cellular prion protein, the \mbox{\PrPc.}
The so-called {\it nucleated polymerization} was proposed by~\cite{Lansbury} as a conversion mechanism of \PrPc into \mbox{\PrPSc.}
According to this theory the \PrPSc is in a polymeric form and the polymers can lenghten by attaching \PrPc monomers and transconforming them into \PrPSc.
To understand more qualitatively this mechanism, a mathematical model consisting in a infinite number of coupled ordinary differential equations (ODEs) was introduced in~\cite{Masel}.
Then a partial differential equation (PDE) version of this model was proposed in~\cite{Greer} (see also~\cite{DGL} for a rigourous derivation).
This equation, known as the prion equation, was studied in various works in the last few years~\cite{Engler,Pruss,SimonettWalker,Walker,LW,CL1,CL2,PG}.
A more general model including general incidence of the total population of polymers on the polymerization process and a coagulation term was proposed in~\cite{Greer2}.

In the present work we propose to investigate the prion equation with general incidence, but without coagulation, which writes
\begin{equation}
\label{eq:prion_gen_inc}
\left\{
\begin{array}{rll}
\f{d}{dt}V(t)&=&\displaystyle \lambda -\delta V(t) - \f{V(t)}{1+\omega\int x^pu}\int_{0}^{\infty} \tau(x) u(t,x) \; dx\,,
\vspace{.2cm}\\
\p_t u(t,x) &=& \displaystyle - \f{V(t)}{1+\omega\int x^pu}\,\p_x\big(\tau(x) u(t,x)\big) - \mu(x) u(t,x) + {\mathcal F} u(t,x),
\end{array} \right.
\end{equation}
where $\F$ defined by
\[\mathcal F u(x):=2\int_x^\infty \beta(y)\kappa(x,y)u(y)\,dy-\beta(x)u(x)\]
is the fragmentation operator.
Dynamics~\eqref{eq:prion_gen_inc} is subjected to nonnegative initial conditions $V_0$ and $u_0(x).$
The unknown $V(t)$ represents the quantity of \PrPc monomers at time $t$ while $u(t,x)$ is the quantity of \PrPSc polymers of size $x.$
The \PrPc is produced by the cells with the rate $\lambda$ and degraded with the rate $\delta.$
The \PrPSc polymers have a death rate $\mu(x)$ and they can break into two smaller pieces with the fragmentation rate $\beta(x).$
The kernel $\kappa(x,y)$ gives the size distribution of the fragments.
The ``general incidence'' corresponds to the term $\f{1}{1+\omega\int x^pu}$ in front of the polymerization rate $\tau(x),$ with $\omega\geq0$ and $p\geq0.$
The case $\omega=0$ corresponds to the mass action law, {\it i.e.} the original model without general incidence.
The more interesting case $\omega>0$ corresponds to the case when the total population of polymers induces a saturation effect on the polymerization process.
In~\cite{Greer2} the parameter $p$ is equal to 1, meaning that the saturation is a function of the total number of polymerized proteins.
To be more general and to take into account the fact that the polymers are not necessarily linear fibrils but can have more complex spatial structure (see~\cite{Masel}), we consider in our study any parameter $p\geq0.$
In~\cite{Greer2}, the polymerization rate $\tau(x)$ is supposed to be independant of $x.$
But some works~\cite{PG,Silveira} indicate that the polymerization ability, which relies on the infectivity of a polymer, may depend on its size.
For mathematical convinience in our work we assume that this dependence is linear
\begin{equation}\label{as:tau}
\tau(x)=\tau x\qquad(\tau>0).
\end{equation}
Notice that for such a function $\tau(x)$ there is no need of a boundary condition at $x=0$ for the equation on $u(t,x).$
In~\cite{Greer2} they restrict their study to linear global fragmentation rates $\beta(x)$ and to the homogeneous fragmentation kernel $\kappa(x,y)=1/y.$
Together with the assumption of a constant death term $\mu,$ it allows them to reduce the PDE model to a system of three ODEs.
Here we keep the assumption of a constant death term
\begin{equation}\label{as:mu}
\mu(x)\equiv\mu>0,
\end{equation}
but we consider more general global fragmentation rates
\begin{equation}\label{as:beta}
\beta(x)=\beta x^\gamma\qquad(\beta,\gamma>0)
\end{equation}
and more general (self-similar) fragmentation kernel
\begin{equation}\label{as:kappa}
\kappa(x,y)=\f{1}{y}\wp\Bigl(\f xy\Bigr)
\end{equation}
where $\wp(z)$ is a smooth function defined on $[0,1].$
To ensure the conservation of the total number of \PrPSc monomers during the fragmentation, the operator $\F$ must verify $\int_0^\infty x\mathcal Fu(x)\,dx=0$ for any function $u.$
This property is satisfied under the following assumption on $\wp$
\begin{equation}\label{as:wp}
2\int_0^1 z\wp(z)\,dz=1.
\end{equation}
Condition~\eqref{as:wp} is fulfilled for $\wp$ a symmetric, in the sense that $\wp(z)=\wp(1-z),$ probability measure.
We additionnally suppose that the derivative of $\wp$ satisfies
\begin{equation}\label{as:wp'}\int_0^1|\wp'(z)|\,dz<+\infty.\end{equation}

Our study of Equation~\eqref{eq:prion_gen_inc} is performed in the space $\R\times X,$ where $X:=L^1(\R_+,dx)\cap L^1(\R_+,x^ r dx)$ with $r>1.$
More precisely we work in the positive cone $\R_+\times X_+$ which is invariant under the dynamics~\eqref{eq:prion_gen_inc}.
We take $r\geq p$ in order to have $L^1(\R_+,x^pdx)\subset X,$ so that the general incidence term is well defined.
The space $X$ is a Banach space for the natural norm $\|\cdot\|_X=\|\cdot\|_0+\|\cdot\|_r$ where $\|u\|_\alpha=\int_0^\infty |u(x)|x^\alpha dx.$
But for a part of our study, we also need to consider the weaker norm $\|\cdot\|_1$ on $X.$

Denote by $X_{\rm w}$ the space $X$ endowed with its weak topology.
The solutions of Equation~\eqref{eq:prion_gen_inc} are understood in the following weak sense.
\begin{definition}\label{def:sol}
Given $V_0>0$ and $u_0\in X_+,$ we call $(V,u)$ a (global) weak solution to Equation~\eqref{eq:prion_gen_inc} if
\begin{enumerate}[label=(\roman*),itemsep=0pt]
\item $V\in C^1(\R_+)$ is a non-negative solution to
\[\dot V=\lambda-\Bigl[\delta+\frac{\tau\int xu}{1+\omega\int x^pu}\Bigr]V,\]
\item $u\in C(\R_+,X_{\rm w})\cap L^1_{loc}(\R_+,L^1(x^\gamma\,dx))$ and for all $t>0,$ $u(t,\cdot)\in X_+,$
\item for all $t>0$ and $\varphi\in W^{1,\infty}(\R_+)$ there holds
\begin{align*}
\int_0^\infty u(t,x)&\varphi(x)\,dx=\int_0^\infty u_0(x)\varphi(x)\,dx+\tau\int_0^t V(s)\frac{\int_0^\infty xu(s,x)\varphi'(x)\,dx}{1+\omega\int_0^\infty u(s,x)\,dx}\,ds\\
& -\mu\int_0^t\int_0^\infty u(s,x)\varphi(x)\,dxds
+ \beta\int_0^t\int_0^\infty x^\gamma u(s,x)\Bigl[2\int_0^1\varphi(zx)\wp(z)\,dz-\varphi(x)\Bigr]\,dxds.
\end{align*}
\end{enumerate}
\end{definition}

The question of the existence and uniqueness of solutions is addressed in~\cite{SimonettWalker,Walker,LW,EscoMischler3} for very similar equations.
In the present paper we are interested in the long time behavior of the solutions to Equation~\eqref{eq:prion_gen_inc} -- in the sense of Definition~\ref{def:sol} -- , assuming their existence.

\

We easily check that $(\bar V=\frac\lambda\delta,0)$ is a steady state of our equation.
We call this trivial steady state the disease free equilibrium (DFE) since there is no polymerized proteins in this situation ($u\equiv0$).
A natural question is to know whether there exist endemic equilibria (EE), namely steady states $(V_\infty,u_\infty)\in\R_+\times X_+^*$ where $X_+^*=X_+\setminus\{0\}.$
For an EE, we get by testing the equation on $u_\infty$ against $x$ and using the relation $\int_0^\infty x\mathcal Fu_\infty(x)\,dx=0$ that
\begin{equation}\label{eq:EE1}
\frac{V_\infty\tau}{1+\omega\int x^pu_\infty}=\mu,
\end{equation}
and then $u_\infty$ is a positive solution to
\begin{equation}\label{eq:EE2}
\mu\bigl(xu_\infty(x)\bigr)'+\mu u_\infty(x)=\mathcal F u_\infty(x).
\end{equation}
The existence of an EE as well as the stability of the DFE depend on the basic reproduction rate $\mathcal R_0$ of Equation~\eqref{eq:prion_gen_inc}, which indicates the average number of new infections caused by a single infective introduced to an entirely susceptible population.
To find this parameter $\mathcal R_0,$ we linearize the equation on $u$ about the DFE $(\bar V,0)$ and we test the resulting equation against $x$ to obtain
\[\frac{d}{dt}\int_0^\infty x u(t,x)\,dx\simeq \bar V\tau\int_0^\infty x u(t,x)\,dx-\mu \int_0^\infty x u(t,x)\,dx.\]
We deduce that $\mathcal R_0$ is given by
\[\mathcal R_0=\frac{\bar V\tau}{\mu}=\f{\lambda\tau}{\delta\mu}.\]
It is worth noticing that this parameter does not depend on the fragmentation coefficients $\beta,$ $\gamma,$ and $\wp.$
We can now summarize the results of the paper in the following main theorem. 

\begin{theorem}\label{th:summary}
If $\mathcal R_0\leq1,$ the unique equilibrium in $\R_+\times X_+$ is the DFE.
It is globally asymptotically stable for the norm $|V|+\|u\|_1.$

If $\mathcal R_0>1,$ then there exists a unique EE which coexists with the DFE.
The EE is locally stable for the norm $|V|+\|u\|_X$, and the nontrivial trajectories cannot approach the DFE in the sense that
\[u_0\not\equiv0\quad\implies\quad\liminf_{t\to+\infty}\int_0^\infty xu(t,x)\,dx>0.\]
In the case when $p\geq1$ and $\delta\geq\mu,$ the EE is globally  asymptotically stable in $\R_+\times X_+^*$ for the norm $|V|+\|u\|_X.$
\end{theorem}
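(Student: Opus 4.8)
The plan is to follow the reduction strategy of \cite{PG2}: project the dynamics onto a few macroscopic observables, control the leftover ``shape'' of $u$ via the spectral gap of the self-similar growth--fragmentation equation, and analyse the resulting low-dimensional system. Write the $u$-equation as a growth--fragmentation equation $\p_t u=-\e(t)\,\p_x(xu)-\mu u+\F u$ driven by the scalar effective rate $\e(t):=\f{\tau V(t)}{1+\omega\int_0^\infty x^pu(t,x)\,dx}$, and introduce $\rho(t):=\int_0^\infty xu(t,x)\,dx$, which equals $\|u(t,\cdot)\|_1$, and $Q(t):=\int_0^\infty x^pu(t,x)\,dx$. Testing against $1$, $x$ and $x^p$ and using $\int_0^\infty x\F u=0$ gives $\dot V=\lambda-\delta V-\e\rho$, $\dot\rho=(\e-\mu)\rho$, and an equation for $Q$ that also involves $\int x^{p+\gamma}u$, so the system is not closed for $\gamma\ne0$. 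The cheap a priori bounds come first: $V(t)$ stays in a fixed compact subinterval of $(0,\infty)$ (from $\dot V\le\lambda-\delta V$ and $\dot V=\lambda-(\delta+\f{\tau\rho}{1+\omega Q})V$), $\frac{d}{dt}(V+\rho)=\lambda-\delta V-\mu\rho$ bounds $\rho$, and, since $2\int_0^1 z^r\wp<1$ for $r>1$, the equation for $\|u\|_r$ carries a super-linear damping term (by log-convexity of moments) which, with a sublinear bound on $\|u\|_0$, gives $\sup_t\|u(t,\cdot)\|_X<\infty$. The main technical step is to upgrade the \emph{stationary} spectral gap for the self-similar equation -- whose frozen-rate dynamics has, by scaling, a one-parameter family of normalised profiles $\U_\e$ (rescalings of a fixed $\U_1$), with Malthus exponent $\e$ -- into a \emph{non-autonomous} relaxation estimate: controlling $\dot\e$ and using robustness of the gap, one shows $u(t,\cdot)/\rho(t)\to\U_{\e(t)}$ exponentially in the weighted $L^1$ gap norm, so that $Q$ and $\int x^{p+\gamma}u$ become explicit functions of $(\rho,\e)$ up to an exponentially small remainder $h(t)$. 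This closes the dynamics into a nonlinear system of three ODEs in $(V,\rho,\e)$ -- equivalently $(V,\rho,h)$ -- that is an exponentially small non-autonomous perturbation of an autonomous planar system in $(V,\rho)$ whose equilibria are the DFE $(\bar V,0)$ and, exactly when $\mathcal R_0>1$, a point with $\e=\mu$.

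Equilibria: any EE satisfies $\e_\infty=\mu$ (this is~\eqref{eq:EE1}), $\lambda=\delta V_\infty+\mu\rho_\infty$, and $\mu(1+\omega Q_\infty)=\tau V_\infty$; eliminating $V_\infty$ and $Q_\infty=\rho_\infty\int x^p\U_\mu$ gives $\mu\rho_\infty\times(\text{positive factor})=\lambda-\tfrac{\delta\mu}\tau=\tfrac{\delta\mu}\tau(\mathcal R_0-1)$, so there is no EE when $\mathcal R_0\le1$ and exactly one when $\mathcal R_0>1$, with $u$-component $\rho_\infty\U_\mu$ (unique by uniqueness of the self-similar profile). For $\mathcal R_0\le1$, use on the exact pair $(V,\rho)$ -- no reduction needed here, only $Q\ge0$ -- the Volterra functional $L=V-\bar V-\bar V\ln\tfrac V{\bar V}+\rho$: a short computation using $\lambda=\delta\bar V$ and $\e/V=\tau/(1+\omega Q)\le\tau$ yields $\dot L\le-\delta\tfrac{(V-\bar V)^2}{V}-\mu(1-\mathcal R_0)\rho\le0$, whence $V\to\bar V$ and (by LaSalle when $\mathcal R_0=1$, since $\e>0$ forces $\rho=0$ on $\{V=\bar V\}$) $\rho=\|u\|_1\to0$: the DFE is globally asymptotically stable for $|V|+\|u\|_1$.

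For $\mathcal R_0>1$: the DFE of the planar system is a hyperbolic saddle, unstable in the $\rho$-direction ($\e-\mu\to\mu(\mathcal R_0-1)>0$ when $\rho\to0$), which yields persistence -- concretely, if $\rho(t)\to0$ then $\e\rho\to0$, so $V\to\bar V$ and, using the a priori bounds to get $Q\to0$, $\e\to\bar V\tau>\mu$, so $\dot\rho=(\e-\mu)\rho>0$ eventually, a contradiction; since also $\rho(t)>0$ for all $t$ (as $\dot\rho=(\e-\mu)\rho$), we get $\liminf_t\int xu(t,x)\,dx>0$. For stability of the EE one linearises: the Jacobian of the reduced system at the EE is a stable matrix (Routh--Hurwitz, by a sign chase using $\delta,\mu,\e_\infty=\mu>0$ and $p,\omega\ge0$); since the dropped shape variable $h$ is exponentially contracting, a routine perturbation argument gives local asymptotic stability of the EE, in the strong norm $|V|+\|u\|_X$ because the $X$-bound and that norm are precisely what control $Q$ (as $p$ may exceed $1$) and the nonlinear terms.

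The hard part is the global stability of the EE when $p\ge1$ and $\delta\ge\mu$. Here I would use a global Lyapunov functional $\caa(V,u)=\big(V-V_\infty-V_\infty\ln\tfrac V{V_\infty}\big)+c_1\big(\rho-\rho_\infty-\rho_\infty\ln\tfrac\rho{\rho_\infty}\big)+c_2\,\HH(u/\rho\,|\,\U_\mu)$, where $\HH(\cdot\,|\,\U_\mu)$ is a general relative entropy for $\p_t v=-\mu\p_x(xv)+\F v$ built from its adjoint eigenfunction and dissipated by the shape dynamics. Differentiating along trajectories, the $(V,\rho)$-part produces the usual Volterra cross terms plus a term proportional to $\e-\mu$, the discrepancy between the instantaneous and equilibrium polymerisation rates; the hypothesis $p\ge1$ makes $\e\mapsto Q$ monotone in the direction that lets this term be absorbed into the entropy dissipation, and $\delta\ge\mu$ is exactly what makes the remaining quadratic form in $(V-V_\infty,\rho-\rho_\infty)$ negative semidefinite. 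After fixing $c_1,c_2$ one obtains $\dot\caa\le0$, and LaSalle's invariance principle -- available thanks to the uniform a priori bounds, which give weak precompactness of the trajectory -- identifies the $\omega$-limit set with $\{\dot\caa=0\}$, which by strictness of the entropy dissipation off profiles and of the quadratic form off $(V_\infty,\rho_\infty)$ reduces to the EE; hence global asymptotic stability of the EE for $|V|+\|u\|_X$. The two genuinely delicate points are this Lyapunov construction -- in particular checking that $p\ge1$ and $\delta\ge\mu$ are precisely what is needed to sign the cross terms -- and, upstream, making the non-autonomous spectral-gap reduction quantitative and uniform in time (the bootstrap between the decay of $h$ and the bound on $\dot\e$); the rest (a priori bounds, equilibrium count, linearised stability, persistence) is comparatively routine.
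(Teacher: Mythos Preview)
Your overall plan—reduce via a spectral gap to a low-dimensional ODE system and analyse that system—matches the paper's, and your treatment of the equilibrium count, DFE stability, and local EE stability is close in spirit (Volterra-type functionals, Routh--Hurwitz). But two points differ substantially, and one of them is a real gap.

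\medskip

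\textbf{Reduction.} The paper does not track $u/\rho$ against a moving target $\U_{\e(t)}$ and does not need any ``robustness of the gap under $\dot\e$'' argument. It uses an \emph{exact} self-similar change of variable: setting $v(h(t),x)=W^k(t)\,u(t,W^k(t)x)\,e^{\mu(t-h(t))}$ with $\dot W=\gamma W\bigl(\f{\tau V}{1+\omega\int x^pu}-\mu W\bigr)$ and $\dot h=W$, the function $v$ satisfies the \emph{linear} self-similar equation $\p_t v+\mu\p_x(xv)+\mu v=\F v$, so Theorem~\ref{th:MS} applies directly and gives $\e_\alpha(t):=M_\alpha^{-1}\int(\varrho_0^{-1}v(h(t),x)-\U(x))x^\alpha\,dx\to0$ for $\alpha\in[0,r]$. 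The reduced variables are then $(V,W,Q)$ (equivalently $(V,W,P)$ with $P=W^kQ$), and the reduced system is an explicit ODE perturbed by $\e_p(t)\to0$. This sidesteps the non-autonomous bootstrap you flag as delicate.

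\medskip

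\textbf{Global stability of the EE.} This is where your proposal has a genuine gap. You propose a Lyapunov functional $\caa$ mixing two Volterra terms with a relative entropy and assert that $p\ge1$ and $\delta\ge\mu$ are ``exactly what is needed to sign the cross terms''; but you do not compute $\dot\caa$, and there is no evident reason such a functional works (the coupling through $\e=\tau V/(1+\omega Q)$, with $Q$ depending on the shape of $u$, makes the cross terms genuinely awkward). The paper's argument is completely different and explains the two hypotheses structurally: passing to the variables $(Y,Q,P)=(V+P,\,Q,\,P)$, the Jacobian sign pattern of the reduced system is
\[
\left[\begin{array}{ccc}
- & 0 & \sgn(\delta-\mu)\\
0 & * & +\\
+ & \sgn(p-1) & *
\end{array}\right],
\]
so precisely when $p\ge1$ and $\delta\ge\mu$ the system is irreducible cooperative. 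Smith's theory of monotone dynamical systems then forces every bounded trajectory of the autonomous limit to converge to an equilibrium; persistence rules out the DFE, and a perturbation lemma (since $\e_p\to0$) transfers this to the non-autonomous system. This monotone-systems observation is the key idea your proposal is missing.

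\medskip

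\textbf{A smaller gap.} Your persistence argument only excludes $\rho(t)\to0$ (you derive a contradiction from that hypothesis); ``$\rho(t)>0$ for all $t$ and $\rho\not\to0$'' does \emph{not} imply $\liminf_t\rho(t)>0$. The paper obtains the full statement by introducing $R=P\tilde V^{-\alpha}$ with $\alpha$ large enough that $\alpha\delta>\mu$, showing $\liminf R>0$ from a differential inequality for $R$, and then deducing $\liminf P>0$ from a differential inequality for $P$ in which $R$ appears.
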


\

The paper is organized as follows:
In Section~\ref{se:reductionODE} we explain the method which allows to reduce Equation~\eqref{eq:prion_gen_inc} to a system of ODEs, and
in Section~\ref{se:proof} we take advantage of this reduction to prove Theorem~\ref{th:summary}.

\

\section{Reduction to a system of ODEs}\label{se:reductionODE}

As suggested by Equation~\eqref{eq:EE2}, we use the properties of the linear growth-fragmentation equation
\begin{equation}\label{eq:linearGF}
\p_tu(t,x)+\mu\,\p_x\bigl(xu(t,x)\bigr)+\mu u(t,x)=\F u(t,x).
\end{equation}
This equation is also known as the {\it self-similar fragmentation equation} (see~\cite{EscoMischler3,CCM10,CCM11,GS,MS}).
Using Assumption~\eqref{as:wp} we obtain (at least formally) that Equation~\eqref{eq:linearGF} preserves the mass
\begin{equation}\label{eq:mass_conserv}
\forall t\geq0,\qquad \int_0^\infty xu(t,x)\,dx=\varrho_0:=\int_0^\infty xu_0(x)\,dx.
\end{equation}

\

Under Assumptions~\eqref{as:tau}-\eqref{as:wp}, this equation admits a unique (up to normalization) positive steady state $\U(x)$
(see~\cite{EscoMischler3,DG}), {\it i.e.} a unique $\U\in L^1(\R_+,x\,dx)$ satisfying
\[\mu \bigl(x\,\U(x)\bigr)'+\mu\,\U(x)=\F\U(x),\qquad \U(x)>0,\qquad \int_0^\infty x\,\U(x)\,dx=1.\]
This steady state belongs to $L^1(\R_+,x^\alpha dx)$ for any $\alpha\geq0,$ so it belongs to $X_+.$
The convergence of the solutions to this equilibrium has been investigated in~\cite{EscoMischler3,MMP2} and recent results give the exponential relaxation under some assumptions and in suitable spaces (see~\cite{PR,LP,CCM10,CCM11,BCG,GS,MS}).
Here we use the spectral gap result recently proved in~\cite{MS} under the assumption that $\wp$ is a smooth function satisfying Assumption~\eqref{as:wp'}.

\begin{theorem}[\cite{MS}]\label{th:MS}
Under Assumptions~\eqref{as:tau}-\eqref{as:wp'}, there exist $a>0$ and $C>0$ such that
\begin{equation}\label{eq:expo_decay}
\forall u_0\in X,\ \forall t\geq0,\qquad\|u(t,\cdot)-\varrho_0\,\U\|_X\leq Ce^{-at}\|u_0-\varrho_0\,\U\|_X.
\end{equation}
\end{theorem}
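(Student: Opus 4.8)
The plan is to establish the spectral gap \eqref{eq:expo_decay} by the factorization and enlargement method of Mischler--Scher (building on Gualdani--Mischler--Mouhot). Write \eqref{eq:linearGF} as $\p_t u=\mathcal L u$ with $\mathcal L u=-\mu(xu)'-\mu u+\F u$. Mass conservation \eqref{eq:mass_conserv} says precisely that $\phi(x)=x$ is a dual eigenfunction for the eigenvalue $0$, whose normalized direct eigenvector is $\U$; the associated spectral projection is $\Pi u=\bigl(\int_0^\infty xu\bigr)\U=\varrho_0\,\U$, and $I-\Pi$ projects onto the mass-zero hyperplane. Since $u(t)-\varrho_0\,\U=S_{\mathcal L}(t)(I-\Pi)u_0$, the estimate \eqref{eq:expo_decay} is exactly the statement that $S_{\mathcal L}(t)(I-\Pi)$ decays like $e^{-at}$ in $X$, i.e. that $\mathcal L$ has a spectral gap of width $a$ to the left of $0$ and no other spectrum in $\{\Re z\ge -a\}$.

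First I would split $\mathcal L=\mathcal A+\mathcal B$, isolating the unbounded dissipation: take $\mathcal A$ to be a bounded truncation of the fragmentation gain $2\int_x^\infty\beta y^{\gamma-1}\wp(x/y)u(y)\,dy$ to a compact range of $(x,y)$, and let $\mathcal B=\mathcal L-\mathcal A$ carry the transport, death, and (essentially all of the) fragmentation-loss terms. Testing the loss part of $\mathcal B$ against $\sgn(u)\,x^\alpha$ and integrating the transport term by parts yields the weighted a priori estimate
\[\f{d}{dt}\|u\|_\alpha\le \mu(\alpha-1)\|u\|_\alpha-\beta\|u\|_{\alpha+\gamma},\]
and since $\beta x^\gamma\to\infty$ dominates the growth coefficient $\mu(\alpha-1)x^\alpha$ at large $x$, this shows that $S_{\mathcal B}(t)$ decays exponentially in $X$ at some rate $a_{\mathcal B}>0$ that can be taken larger than the gap $a$ produced below; so $\mathcal B$ is hypo-dissipative in $X$. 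The crucial structural input is that $\mathcal A$ is regularizing: Assumption \eqref{as:wp'} (the integrability of $\wp'$) is what allows one to differentiate the gain integral, transfer the derivative off $u$, and conclude that $\mathcal A$ maps $X$ boundedly into a strictly smaller reference space $E$, so that $\mathcal A\,S_{\mathcal B}(t)$ is (power-)compact.

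Next I would establish the spectral gap in the small space $E$. Power-compactness of $\mathcal A\,S_{\mathcal B}$ together with the hypo-dissipativity of $\mathcal B$ localizes the spectrum of $\mathcal L$ in $\{\Re z>-a_{\mathcal B}\}$ as a finite set of isolated eigenvalues of finite algebraic multiplicity. Positivity of the semigroup and a Krein--Rutman argument then identify the dominant eigenvalue: it is real, it equals $0$ by mass conservation, and it is algebraically simple with positive eigenvector $\U$. To open a genuine gap I would rule out any other eigenvalue on $\{\Re z=0\}$; here the constancy of the death rate $\mu$ leaves no periodic structure, and the General Relative Entropy method for \eqref{eq:linearGF} gives strong convergence of $u(t)$ to $\varrho_0\,\U$, which forbids boundary eigenvalues other than $0$. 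This yields $\Sigma(\mathcal L)\cap\{\Re z\ge -a\}=\{0\}$ for some $0<a<a_{\mathcal B}$, and the partial spectral mapping theorem converts this into the decay $\|S_{\mathcal L}(t)(I-\Pi)\|_{E\to E}\le Ce^{-at}$.

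Finally I would transfer the gap from $E$ to $X$ by the enlargement theorem: since $\mathcal B$ is hypo-dissipative in $X$ with rate $a_{\mathcal B}>a$, $\mathcal A$ is bounded from $X$ into $E$, and the gap holds in $E$, the theorem gives $\|S_{\mathcal L}(t)(I-\Pi)\|_{X\to X}\le Ce^{-at}$, which is exactly \eqref{eq:expo_decay}. I expect the main obstacles to be the two genuinely analytic points: constructing a reference space $E$ in which one simultaneously has the regularizing bound on $\mathcal A$ (via \eqref{as:wp'}) and can carry out the spectral analysis, and excluding eigenvalues on the imaginary axis so as to turn the qualitative convergence picture into a quantitative rate $a$. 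By contrast, the hypo-dissipativity estimate for $\mathcal B$ and the enlargement step itself are comparatively routine once the splitting is fixed.
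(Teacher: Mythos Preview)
The paper does not prove Theorem~\ref{th:MS}: it is quoted directly from~\cite{MS} and used as a black box, so there is no ``paper's own proof'' to compare against. Your proposal is not a proof of the statement within this paper's framework but rather a sketch of the argument in the cited reference; as such it is broadly faithful to the Mischler--Scher factorization/enlargement strategy (splitting $\mathcal L=\mathcal A+\mathcal B$, hypo-dissipativity of $\mathcal B$ via moment estimates, regularization of $\mathcal A$ using~\eqref{as:wp'}, Krein--Rutman to identify the leading eigenvalue, and enlargement from a small space $E$ to $X$). If the intent is to reproduce the result here, be aware that the two points you yourself flag---constructing the reference space $E$ and ruling out other eigenvalues on the imaginary axis---are where essentially all of the work in~\cite{MS} lies, and your outline does not go beyond naming them; in particular the GRE argument you invoke gives convergence but not by itself the absence of boundary spectrum in the enlarged space, and the precise regularizing bound on $\mathcal A$ requires more than one differentiation of the gain term. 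For the purposes of the present paper, however, the correct move is simply to cite~\cite{MS}, as the author does.
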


\

The method we use to prove Theorem~\ref{th:summary} is based on a (time dependent) self-similar change of variable introduced in~\cite{PG2} which allows to transform a solution of the prion equation into a solution to the linear growth-fragmentation equation.
Then we combine the spectral gap result~\eqref{eq:expo_decay} with an asymptotic analysis of the change of variable to get the long time behavior of Equation~\eqref{eq:prion_gen_inc}.

\

The change of variable is defined as follows.
Starting from $u(t,x)\geq0$ and $V(t)\geq0$ solution to Equation~\eqref{eq:prion_gen_inc} we define for $k:=\gamma^{-1}$
\[v(h(t),x):=W^k(t)u(t,W^k(t)x)\,e^{\mu(t-h(t))}\]
with $W$ solution to
\[\dot W=\gamma W\biggl(\frac{\tau}{1+\omega\int x^pu}V-\mu W\biggr),\]
and $h$ the solution to $\dot h=W,\ h(0)=0.$
We choose $W(0)=1$ to have $v(t=0,\cdot)=u_0.$
Since $V$ is positive we have $\dot W\geq-\gamma\mu W^2$ and so $W\geq\f{1}{1+\gamma\mu t}.$
As a consequence $h(t)\geq\f1{\gamma\mu}\ln(1+\gamma\mu t)\to+\infty$ when $t\to+\infty$ so $h$ is a bijection of $\R_+$ and $v(t,\cdot)$ is well defined for all $t\geq0.$
We can check that $v$ is a solution to the linear equation~\eqref{eq:linearGF}.
Then the convergence result of Theorem~\ref{th:MS} ensures that
\[v(t,x)\xrightarrow[t\to+\infty]{\|\cdot\|_X}\varrho_0\,\U(x).\]
We deduce, for $\alpha\in[0,r],$ the equivalence
\[\int_0^\infty x^\alpha u(t,x)\,dx\underset{t\to+\infty}{\sim}\varrho_0M_\alpha W^{k\alpha}(t)\,e^{\mu(h(t)-t)}\]
where $M_\alpha=\int_0^\infty x^\alpha\U(x)\,dx.$
This equivalence allows us to obtain an (asymptotically) closed system of ODEs which provides the behavior of the change of variables.
Define $Q(t)=\varrho_0\,e^{\mu(h(t)-t)}$ which satisfies
\[\dot Q=\mu Q(W-1).\]
Then denoting $f(I)=\f{\tau}{1+\omega M_pI}$ we have
\[\dot W\underset{t\to+\infty}{\sim}\gamma W\bigl(f(W^{kp}Q)V-\mu W\bigr)\]
and, since $M_1=1$ by definition of $\U,$
\[\dot V\underset{t\to+\infty}{\sim}\lambda-V\bigl(\delta+f(W^{kp}Q)W^kQ\bigr).\]
To make these equivalences more precise, we define for $\alpha\geq0$
\[\e_\alpha(t):=\f{\int x^\alpha u(t,x)\,dx}{M_\alpha Q(t)W^{k\alpha}(t)}-1=\f1{M_\alpha}\int(\varrho_0^{-1}v(h(t),x)-\U(x))x^\alpha\,dx\]
The following Lemma ensures that $\e_\alpha(t)\to0$ when $t\to+\infty$ if $\alpha\in[0,r].$
\begin{lemma}\label{lm:eps_bound}
For any $\alpha\in[0,r],$ there exists $C>0$ such that
\[|\e_\alpha(t)|\leq C\,\|\varrho_0^{-1}u_0-\U\|_X\,e^{-ah(t)}.\]
\end{lemma}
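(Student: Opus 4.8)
The plan is to deduce the bound directly from the spectral gap Theorem~\ref{th:MS}, using that the rescaled density $v$ solves the linear equation~\eqref{eq:linearGF} and that all intermediate moments of order $\alpha\in[0,r]$ are controlled by the $X$-norm. Recall that $v$ is defined so that $v(0,\cdot)=u_0\in X$, that~\eqref{eq:linearGF} conserves the first moment (so $\int_0^\infty x\,v(s,x)\,dx=\varrho_0$ for all $s\ge0$), and hence that the attractor for $v$ is exactly $\varrho_0\,\U$. Since $h(0)=0$, $h$ is increasing, and $h(t)\to+\infty$, the variable $s=h(t)$ exhausts $\R_+$, so applying Theorem~\ref{th:MS} at time $s=h(t)$ gives
\[\|v(h(t),\cdot)-\varrho_0\,\U\|_X\le Ce^{-ah(t)}\,\|u_0-\varrho_0\,\U\|_X.\]

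Next I would estimate the moment appearing in the second formula for $\e_\alpha$ recorded just before the statement, i.e.\ the one expressed directly in terms of $v$. Since $\U$ has unit first moment and $v(h(t),\cdot)$ has first moment $\varrho_0$,
\[|\e_\alpha(t)|=\frac1{M_\alpha}\Bigl|\int_0^\infty\bigl(\varrho_0^{-1}v(h(t),x)-\U(x)\bigr)x^\alpha\,dx\Bigr|\le\frac1{M_\alpha\,\varrho_0}\int_0^\infty\bigl|v(h(t),x)-\varrho_0\,\U(x)\bigr|x^\alpha\,dx.\]
Using $x^\alpha\le1+x^r$ for every $x\ge0$ and $0\le\alpha\le r$, the last integral is bounded by $\|v(h(t),\cdot)-\varrho_0\,\U\|_0+\|v(h(t),\cdot)-\varrho_0\,\U\|_r=\|v(h(t),\cdot)-\varrho_0\,\U\|_X$. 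Plugging in the exponential decay above, absorbing $C/(M_\alpha\varrho_0)$ into a new constant, and using $\varrho_0^{-1}\|u_0-\varrho_0\,\U\|_X=\|\varrho_0^{-1}u_0-\U\|_X$, yields the claimed inequality.

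This is essentially a change of variable followed by a triangle inequality, so I do not expect a genuine obstacle; what needs care is only bookkeeping. One point is to ensure $\varrho_0>0$, which is the standing hypothesis behind the whole reduction (the case $u_0\equiv0$ forces $u\equiv0$ and is trivial). Another is to verify that the two expressions for $\e_\alpha$ recorded before the statement agree: substituting $v(h(t),x)=W^k(t)u(t,W^k(t)x)e^{\mu(t-h(t))}$ into $\int_0^\infty x^\alpha v(h(t),x)\,dx$ and rescaling the integration variable gives $\int_0^\infty x^\alpha u(t,x)\,dx=W^{k\alpha}(t)e^{\mu(h(t)-t)}\int_0^\infty x^\alpha v(h(t),x)\,dx=M_\alpha Q(t)W^{k\alpha}(t)\bigl(1+\e_\alpha(t)\bigr)$, which legitimizes the $v$-formula used above. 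Finally, uniformity in $t$ is automatic, since the constant $C$ and the rate $a$ provided by Theorem~\ref{th:MS} do not depend on $t$.
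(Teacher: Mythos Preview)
Your argument is correct and is essentially the paper's own proof: both start from the $v$-formula for $\e_\alpha$, bound the $\alpha$-moment by the $X$-norm (you spell out $x^\alpha\le 1+x^r$, the paper does it in one line), and then apply Theorem~\ref{th:MS} at time $h(t)$. The only cosmetic difference is that the paper factors out $\varrho_0^{-1}$ from the start rather than at the end.
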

\begin{proof}
Using Theorem~\ref{th:MS} we have
\begin{align*}
|\e_\alpha(t)|&\leq M_\alpha^{-1}\int|\varrho_0^{-1}v(h(t),x)-\U(x)|x^\alpha\,dx\nonumber\\
&\leq M_\alpha^{-1}\|\varrho_0^{-1}v(h(t),\cdot)-\U\|_X\nonumber\\
&\leq C\,\|\varrho_0^{-1}u_0-\U\|_X\,e^{-ah(t)}.
\end{align*}
\end{proof}
For $\alpha=1$ we even have, using $M_1=1$ and the mass conservation law~\eqref{eq:mass_conserv}, that
\[\forall t\geq0,\qquad\e_1(t)=\int(\varrho_0^{-1}v(h(t),x)-\U(x))x\,dx=0\]
and as a consequence $\int_0^\infty x u(t,x)\,dx=W^k(t)Q(t).$
Setting $f(\e;I)=f((1+\e)I),$ we get that $(V,W,Q)$ is solution to the sytem
\begin{equation}\label{eq:systODE1}
\left\{\begin{array}{rcl}
\dot V&=&\dis\lambda-V\Bigl(\delta+f\bigl(\e_p;W^{kp}Q\bigr)W^kQ\Bigr),
\vspace{.2cm}\\
\dot W&=&\dis\gamma W\Bigl(f\bigl(\e_p;W^{kp}Q\bigr)V-\mu W\Bigr),
\vspace{.2cm}\\
\dot Q&=&\dis \mu Q\bigl( W-1\bigr),
\end{array}\right.
\end{equation}
with the initial condition $(V_0,W_0,Q_0)=(V_0,1,\varrho_0).$
Defining the relevant quantity $P(t)=W^k(t)Q(t)$ and using it instead of $Q$ as an unknown we obtain the other system
\begin{equation}\label{eq:systODE2}
\left\{\begin{array}{rcl}
\dot V&=&\dis\lambda-V\Bigl(\delta+f\bigl(\e_p;W^{k(p-1)}P\bigr)P\Bigr),
\vspace{.2cm}\\
\dot W&=&\dis\gamma W\Bigl(f\bigl(\e_p;W^{k(p-1)}P\bigr)V-\mu W\Bigr),
\vspace{.2cm}\\
\dot P&=&\dis P\Bigl(f\bigl(\e_p;W^{k(p-1)}P\bigr)V-\mu\Bigr).
\end{array}\right.
\end{equation}

\begin{remark}[Interpretation of $V,\ W,\ Q$ and $P$]
By definition we have that $V(t)$ is the number of monomeric proteins (\PrPc).
The relation $P(t)=\int_0^\infty xu(t,x)\,dx$ means that $P(t)$ represents the number of polymerized proteins (\PrPSc).
The unknown $Q(t)$ represents roughly the total number of polymers
\[\int_0^\infty u(t,x)\,dx=(1+\e_0(t))M_0Q(t),\]
and $W(t)$ is related to the mean size of the polymers
\[W^k(t)=(1+\e_0(t))M_0\,\f{\int xu(t,x)\,dx}{\int u(t,x)\,dx}.\]
\end{remark}
Another relevant quantity is $Y=V+P,$ the total number of proteins (\PrPc + \PrPSc). We have a system of ODEs satisfied by $(Y,Q,P)$:
\begin{equation}\label{eq:systODE3}
\left\{\begin{array}{rcl}
\dot Y&=&\dis\lambda-\delta Y+(\delta-\mu)P,
\vspace{.2cm}\\
\dot Q&=&\dis \mu Q\Bigl( P^\gamma Q^{-\gamma}-1\Bigr),
\vspace{.2cm}\\
\dot P&=&\dis P\Bigl(f\bigl(\e_p;P^pQ^{1-p}\bigr)(Y-P)-\mu\Bigr).
\end{array}\right.
\end{equation}
We will use alternatively formulations~\eqref{eq:systODE1},~\eqref{eq:systODE2} and~\eqref{eq:systODE3} to prove our main theorem.
These systems are not autonomous because of the term $\e_p.$
But the property that this term vanishes when $t\to+\infty$ (see Lemma~\ref{lm:eps_bound}) is sufficient to get the asymptotic behavior of the change of variable, as we will see in the next section.

\

\section{Proof of Theorem~\ref{th:summary}}\label{se:proof}

We divide the proof of Theorem~\ref{th:summary} into several propositions.

\begin{proposition}
There exists an EE if and only if $\mathcal R_0>1.$
This EE is unique and is explicitely given by
\[V_\infty=\f{\mu+\lambda\omega M_p}{\tau+\delta\omega M_p}\quad\text{and}\quad u_\infty=Q_\infty\,\U,
\qquad\text{with}\quad Q_\infty=\f{\mathcal R_0-1}{\f{\tau}{\delta}+\omega M_p}.\]
\end{proposition}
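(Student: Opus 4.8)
The plan is to characterize an EE directly through the two relations~\eqref{eq:EE1} and~\eqref{eq:EE2} derived in the introduction, and then to translate these into the explicit formulas for $V_\infty$ and $Q_\infty$. First I would note that~\eqref{eq:EE2} says that $u_\infty$ is, up to a positive multiplicative constant, the unique positive steady state $\U$ of the linear growth-fragmentation equation~\eqref{eq:linearGF}; this uniqueness (up to normalization) is exactly the result quoted from~\cite{EscoMischler3,DG}. Hence any EE must be of the form $u_\infty=Q_\infty\,\U$ for some $Q_\infty>0$, and since $M_1=\int x\,\U\,dx=1$ we get $\int x u_\infty=Q_\infty$ and $\int x^p u_\infty=Q_\infty M_p$. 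Conversely, any such pair $(V_\infty,Q_\infty\U)$ with $V_\infty,Q_\infty>0$ automatically satisfies the $u$-equation of~\eqref{eq:prion_gen_inc} at steady state provided~\eqref{eq:EE1} holds, because~\eqref{eq:EE1} makes the transport coefficient equal to $\mu$ and~\eqref{eq:EE2} is then precisely the steady equation for $\U$.

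Next I would plug this form into the remaining constraints. Relation~\eqref{eq:EE1} becomes
\[
\frac{V_\infty\tau}{1+\omega M_p Q_\infty}=\mu,
\]
and the steady-state version of the $V$-equation (i.e. $\dot V=0$ in Definition~\ref{def:sol}(i)) reads
\[
\lambda=\Bigl(\delta+\frac{\tau Q_\infty}{1+\omega M_p Q_\infty}\Bigr)V_\infty.
\]
Using~\eqref{eq:EE1} to substitute $\frac{\tau}{1+\omega M_p Q_\infty}=\frac{\mu}{V_\infty}$ in the second relation gives $\lambda=\delta V_\infty+\mu Q_\infty$, a clean linear relation between $V_\infty$ and $Q_\infty$. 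Combining this with~\eqref{eq:EE1} rewritten as $\tau V_\infty=\mu(1+\omega M_p Q_\infty)$, I have two linear equations in the two unknowns $V_\infty,Q_\infty$; solving yields
\[
V_\infty=\frac{\mu+\lambda\omega M_p}{\tau+\delta\omega M_p},\qquad
Q_\infty=\frac{\lambda\tau-\delta\mu}{\mu(\tau+\delta\omega M_p)}=\frac{\mathcal R_0-1}{\frac{\tau}{\delta}+\omega M_p},
\]
where the last equality uses $\mathcal R_0=\lambda\tau/(\delta\mu)$ and divides numerator and denominator by $\delta\mu$. This also shows uniqueness: the linear system has a unique solution.

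Finally I would address existence, which is where the $\mathcal R_0>1$ dichotomy enters. The formula for $V_\infty$ always gives a positive number (all parameters being positive), so the only obstruction to $(V_\infty,u_\infty)$ being a genuine EE in $\R_+\times X_+^*$ is the positivity of $Q_\infty$. From the formula, $Q_\infty>0$ if and only if $\mathcal R_0>1$, and $Q_\infty=0$ when $\mathcal R_0=1$ (giving back the DFE), while $\mathcal R_0<1$ forces $Q_\infty<0$, which is not admissible. Since $\U\in X_+$ (stated in the excerpt), $u_\infty=Q_\infty\U$ indeed lies in $X_+^*$ when $Q_\infty>0$. I expect the only real subtlety — the ``main obstacle'' — to be the justification that~\eqref{eq:EE2} forces $u_\infty$ to be a scalar multiple of $\U$ rather than merely a nonnegative solution: one must invoke the uniqueness statement for the positive eigenfunction of the growth-fragmentation operator and check that a nonnegative $X$-solution of~\eqref{eq:EE2} is automatically (a multiple of) this positive eigenfunction, for which one appeals to~\cite{EscoMischler3,DG}; everything else is elementary linear algebra.
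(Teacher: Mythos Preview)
Your proposal is correct and follows essentially the same approach as the paper: reduce $u_\infty$ to $Q_\infty\,\U$ via the uniqueness of the positive eigenfunction, then use~\eqref{eq:EE1} together with the steady $V$-equation to pin down $(V_\infty,Q_\infty)$, and read off the sign of $Q_\infty$ in terms of $\mathcal R_0$. The only difference is cosmetic --- you carry out the linear-algebra elimination explicitly (deriving $\lambda=\delta V_\infty+\mu Q_\infty$ and $\tau V_\infty=\mu(1+\omega M_pQ_\infty)$), whereas the paper records the two relations as the system~\eqref{eq:ODE_EE} and simply asserts the unique solution.
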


\begin{remark}
It is worth noticing that $V_\infty$ given in the proposition belongs to the interval $\bigl(\frac{\mu}{\tau},\bar V\bigr),$ recalling that $\bar V>\frac\mu\tau$ when (and only when) $\mathcal R_0>1.$
\end{remark}

\begin{proof}
We recall that an EE is a positive nontrivial steady state.
We deduce from Equation~\eqref{eq:EE2} and the uniqueness of $\U$ that the function $u_\infty$ of an EE is positively colinear to $\U,$ {\it i.e.} $u_\infty=Q_\infty\,\U$ with $Q_\infty>0.$
Then using the equation on $V$ at the equilibrium and Equation~\eqref{eq:EE1} we get that $(V_\infty,Q_\infty)$ is solution to the system
\begin{equation}\left\{\begin{array}{l}\label{eq:ODE_EE}
\lambda=\delta V_\infty+\frac{\tau V_\infty Q_\infty}{1+\omega M_p Q_\infty},
\vspace{2mm}\\
\mu=\frac{\tau V_\infty}{1+\omega M_pQ_\infty}.
\end{array}\right.\end{equation}
We easily check that this system has a unique solution different from $(\bar V,0),$ given by
\[V_\infty=\f{\mu+\lambda\omega M_p}{\tau+\delta\omega M_p},\qquad Q_\infty=\f{\mathcal R_0-1}{\f{\tau}{\delta}+\omega M_p}.\]
The value of $Q_\infty$ is positive if and only if $\mathcal R_0>1.$
\end{proof}

\

Now we give a useful lemma about the boundedness of $V,\ P$ and $W.$

\begin{lemma}\label{lm:uniform_bounds}
Any solution to Equation~\eqref{eq:systODE1} with $(V_0,W_0,P_0)\in\R_+\times\R_+^*\times\R_+$ satisfies
\[\exists K_0>0,\quad \forall t\geq0,\quad V(t)+P(t)\leq K_0,\]
\[\exists K_2>K_1>0,\quad\forall t\geq0,\quad K_1\leq W^{k(p-1)}(t)\leq K_2.\]
\end{lemma}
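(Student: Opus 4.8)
The plan is to establish the bound on $Y = V+P$ first, since the equation $\dot Y = \lambda - \delta Y + (\delta-\mu)P$ from \eqref{eq:systODE3} almost decouples. From the $V$-equation in \eqref{eq:systODE2} and the $P$-equation there, adding them gives $\dot V + \dot P = \lambda - \delta V - \mu P \le \lambda - \min(\delta,\mu)(V+P)$, since $f\ge 0$ and the cross terms $f(\cdots)PV$ cancel exactly. Hence $\dot Y \le \lambda - \min(\delta,\mu)Y$, and Gronwall's inequality yields $Y(t) \le \max\big(V_0+P_0,\ \lambda/\min(\delta,\mu)\big) =: K_0$ for all $t\ge 0$. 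This immediately bounds both $V$ and $P$ since both are nonnegative (nonnegativity of $V$ is built into Definition~\ref{def:sol}, and $P(t)=\int xu(t,x)\,dx\ge 0$; alternatively $P$ solves a linear equation $\dot P = P(\cdots)$ so it keeps its sign).

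For the two-sided bound on $W^{k(p-1)}$, first I would control $W$ itself. From $\dot W = \gamma W\big(f(\e_p;\cdot)V - \mu W\big)$ and the just-obtained bound $V\le K_0$ together with $f \le \tau$ (as $f(I)=\tau/(1+\omega M_p I)$ and $\omega,M_p,I\ge 0$), one gets $\dot W \le \gamma W(\tau K_0 - \mu W)$, a logistic differential inequality, so $W$ is bounded above by $\max(W_0,\ \tau K_0/\mu)$. For the lower bound on $W$: the excerpt already records $\dot W \ge -\gamma\mu W^2$, giving $W(t)\ge (1+\gamma\mu t)^{-1}$, which tends to $0$ — so $W$ alone is \emph{not} bounded below away from zero. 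This is the point where the quantity $W^{k(p-1)}$, rather than $W$, is the right object, and handling it is the main obstacle. If $p\ge 1$ the exponent $k(p-1)\ge 0$, so $W^{k(p-1)}$ inherits the upper bound from $W$ and is nonnegative; one still needs a positive lower bound. If $p<1$ the exponent is negative and the roles of upper and lower bounds swap. In either case I would pass to the variable $P^pQ^{1-p} = W^{k(p-1)}P$ (the argument of $f$) or argue via $P$ and $Q$ directly.

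Concretely, the cleanest route is: we have shown $P\le K_0$; I claim $P$ is also bounded below by a positive constant after a finite time — but that is false in general (trajectories from $u_0\equiv 0$ have $P\equiv 0$), so instead one should track $W^{k(p-1)}P = P^pQ^{1-p}$ using the $Q$- and $P$-equations of \eqref{eq:systODE3}. From $\dot Q = \mu Q(P^\gamma Q^{-\gamma}-1)$ and $\dot P = P(f\,(Y-P)-\mu)$ with $Y-P = V \in [0,K_0]$ and $f\in[\,\tau/(1+\omega M_p K_0^{p}(\cdots)),\ \tau\,]$ bounded above and below by positive constants once all variables are bounded, one obtains that $\frac{d}{dt}\log(P^pQ^{1-p})$ is a bounded function of the (already bounded) state plus the vanishing error $\e_p$; then a comparison/Gronwall argument on $\log(W^{k(p-1)})$, using $\e_p(t)\to 0$ from Lemma~\ref{lm:eps_bound}, confines it to a compact interval $[\log K_1,\log K_2]$. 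The genuinely delicate bookkeeping is checking that the feedback loop closes — that the bound on $W^{k(p-1)}$ needed to bound $f$ from below is itself recovered — which I would resolve by a bootstrap: first prove boundedness on any finite interval (so solutions are global), then use the logistic structure in $W$ and the sign structure in $Q$ to get the uniform-in-time interval $[K_1,K_2]$, treating the $\e_p$ term as a harmless bounded (indeed decaying) perturbation.
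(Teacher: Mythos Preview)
Your treatment of the bound $V+P\le K_0$ and of the upper bound on $W$ via the logistic inequality $\dot W\le\gamma W(\tau K_0-\mu W)$ is exactly what the paper does. The gap is in the second half. You conclude prematurely that $W$ itself cannot be bounded below away from zero, and then propose a bootstrap on $W^{k(p-1)}P=P^pQ^{1-p}$ that does not close: to bound $f$ from below you need its argument $W^{k(p-1)}P$ bounded above, which (for $p<1$) is precisely the bound you are after, so the loop is circular. The vague ``comparison/Gr\"onwall on $\log(W^{k(p-1)})$'' does not resolve this.

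The missing idea is to bound $V$ from below \emph{first}. From $\dot V\ge\lambda-V(\delta+\tau K_0)$ (using only $f\le\tau$ and $P\le K_0$, no information on $W$) one gets $\liminf_{t\to\infty}V\ge\lambda/(\delta+\tau K_0)>0$. This positivity of $V$ then feeds into the $W$-equation and, together with the already established upper bounds on $W$ and $P$, yields $\liminf_{t\to\infty}W>0$. Thus $W$ itself is trapped between two positive constants, and $W^{k(p-1)}\in[K_1,K_2]$ follows trivially whatever the sign of $p-1$. The paper's case split $p\ge1$ versus $p<1$ concerns only how one extracts the lower bound on $W$ from the positivity of $V$: for $p\ge1$ the argument of $f$ is already bounded above by $\overline W^{k(p-1)}K_0$, hence $f\ge c>0$ and $\dot W\ge\gamma W(cV-\mu W)$ is again logistic; for $p<1$ one instead writes $\dot W\ge\frac{\gamma Wf}{\tau}\bigl(\tau V-\mu\,g(W)\bigr)$ with $g(W)=W\bigl(1+\omega M_p K_0 W^{k(p-1)}\bigr)$ and uses continuity of $g$ to locate a positive barrier $W_1$. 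In both cases the crude estimate $\dot W\ge-\gamma\mu W^2$ is far from sharp once the lower bound on $V$ is in hand.
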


\begin{proof}
We start from
\[\f{d}{dt}(V+P)=\lambda-\delta V-\mu P\leq\lambda-\min(\delta,\mu)(V+P)\]
which ensures by the Gr\"onwall lemma the global boundedness of $V+P.$
Then from
\[\f{d}{dt}W=\f Wk\left(f\bigl(\e_p;W^{k(p-1)}P\bigr)V-\mu W\right)\leq\f Wk\left(\tau K_0-\mu W\right)\]
we get the global boundedness (from above) of $W$ by $\overline W>0.$
From
\[\f{d}{dt}V\geq\lambda-V(\delta+\tau K_0)\]
we obtain that $\liminf_{t\to+\infty}V(t)\geq\f{\lambda}{\delta+\tau K_0}>0.$
Then if $p\geq1$ we deduce from
\[\f{d}{dt}W\geq\f Wk\left(f\bigl(\e_p;\overline W^{k(p-1)}K_0\bigr)V-\mu W\right)\]
that $\liminf_{t\to+\infty}W(t)\geq \mu^{-1}f\bigl(\overline W^{k(p-1)}K_0\bigr)\liminf_{t\to+\infty}V(t)>0$ since $\lim_{t\to+\infty}\e_p(t)\to0.$
For the case $p<1$ we write
\[\f{d}{dt}W\geq\f {W f\bigl(\e_p;W^{k(p-1)}P\bigr)}{k\tau}\left(\tau V-\mu W\bigl(1+\omega (1+\e_p) M_p W^{k(p-1)}K_0\bigr)\right)\]
and we define
\[g(W)=W\bigl(1+\omega M_p W^{k(p-1)}K_0\bigr).\]
The function $g$ is continuous and satisfies $g(0)=0$ and $\lim_{W\to+\infty}g(W)=+\infty,$ so there exists $W_1>0$ such that $g(W_1)=\f\tau\mu\liminf V$ and for all $W<W_1,$ $g(W_1)<\f\tau\mu\liminf V.$
Since $\e_p\to0$ when $t\to+\infty,$ we deduce that $\liminf_{t\to+\infty}W\geq W_1>0.$
Finally we have proved the existence of $K_1$ and $K_2$ because $W_0=1>0$ and $W$ cannot vanish in finite time.
\end{proof}

\begin{proposition}\label{prop:R0<=1}
If $\mathcal R_0\leq1$, then the DFE is globally asymptotically stable for the norm $|V|+\|u\|_1.$
\end{proposition}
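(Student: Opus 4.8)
The plan is to reduce everything to the two scalar quantities $V$ and $P$, using that $\|u(t,\cdot)\|_1=\int_0^\infty xu(t,x)\,dx=P(t)$ since $u(t,\cdot)\in X_+$; it therefore suffices to show $V(t)\to\bar V=\f\lambda\delta$ and $P(t)\to0$, together with a stability estimate. By Lemma~\ref{lm:uniform_bounds} we have $V+P\leq K_0$ for all $t$, and combining the lower bound $\liminf_{t\to+\infty}V(t)\geq\lambda/(\delta+\tau K_0)$ (from the proof of that lemma) with the positivity and continuity of $V$ gives $V_*:=\inf_{t\geq0}V(t)>0$, so $V$ stays in the fixed compact set $[V_*,K_0]$. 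On this set I would introduce the Lyapunov-type functional
\[L(t):=V(t)-\bar V-\bar V\ln\f{V(t)}{\bar V}+P(t),\]
which is nonnegative and vanishes exactly at the DFE $(V,P)=(\bar V,0)$.

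Differentiating $L$ along~\eqref{eq:systODE2} and writing $f:=f(\e_p;W^{k(p-1)}P)$, an elementary expansion of $\dot V(1-\bar V/V)+\dot P$ together with $\lambda=\delta\bar V$ gives
\[\dot L=-\delta\f{(V-\bar V)^2}{V}+\bigl(f\bar V-\mu\bigr)P.\]
Now $f\leq\tau$ because the denominator $1+\omega M_p(1+\e_p)W^{k(p-1)}P$ is $\geq1$ (note $\e_p\geq-1$), and $\tau\bar V=\mu\mathcal R_0\leq\mu$ by hypothesis; hence $f\bar V-\mu\leq0$ and $\dot L\leq-\delta(V-\bar V)^2/V\leq0$. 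Thus $L$ is nonincreasing and, being bounded below, converges to some $L_\infty\geq0$; integrating the inequality yields $\int_0^\infty(V-\bar V)^2/V\,dt<\infty$. Since $\dot V$ is bounded and $V$ is bounded away from $0$, the map $t\mapsto(V(t)-\bar V)^2$ is uniformly continuous and integrable, so Barbalat's lemma gives $V(t)\to\bar V$; consequently the $V$-part of $L$ tends to $0$ and therefore $P(t)\to L_\infty$.

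It remains to prove $L_\infty=0$, and here the clean device is to use the total protein $Y=V+P$, which satisfies $\dot Y=\lambda-\delta V-\mu P$ (formulation~\eqref{eq:systODE3}). By the above, $Y(t)\to\bar V+L_\infty$ while $\dot Y(t)\to\lambda-\delta\bar V-\mu L_\infty=-\mu L_\infty$; a $C^1$ function that converges and whose derivative converges must have derivative limit $0$, so $\mu L_\infty=0$ and hence $P(t)\to0$. This yields $|V(t)-\bar V|+\|u(t,\cdot)\|_1=|V(t)-\bar V|+P(t)\to0$. Finally, since $L$ is nonincreasing and its small sublevel sets are small in the $|V-\bar V|+P$ distance, one also obtains Lyapunov stability of the DFE, and hence global asymptotic stability for the norm $|V|+\|u\|_1$.

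The main obstacle is the critical case $\mathcal R_0=1$: then $f\bar V-\mu$ need not be bounded away from $0$, so one cannot simply conclude $\int_0^\infty P\,dt<\infty$, and a direct LaSalle invariance argument is unavailable because the reduced system~\eqref{eq:systODE2} is only asymptotically autonomous (through $\e_p$). The point that resolves this is that the $Y$-equation contains neither the error term $\e_p$ nor the incidence/fragmentation nonlinearity, so the already-established convergence of $V$ and $P$ forces $\dot Y\to0$ and pins down $L_\infty=0$. When $\mathcal R_0<1$ one could alternatively finish by applying Barbalat's lemma to $P$ directly, since in that case $f\bar V-\mu\leq\mu(\mathcal R_0-1)<0$ and hence $\int_0^\infty P\,dt<\infty$.
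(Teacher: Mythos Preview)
Your argument is correct and it reaches the conclusion by a genuinely different route than the paper. The paper uses the quadratic Lyapunov functional $\bar VP+\tfrac12\tilde V^2$ with $\tilde V=V-\bar V$ and then splits into two cases: for $\mathcal R_0<1$ it gets an exponential decay via Gr\"onwall, while for the critical case $\mathcal R_0=1$ it must exploit the lower--order term $\mu-f\bar V=\mu\bigl(1-\tfrac f\tau\bigr)$, using Lemma~\ref{lm:eps_bound} (to control $\e_p$) and the $W^{k(p-1)}$ bounds from Lemma~\ref{lm:uniform_bounds}, in order to derive a quadratic differential inequality and a $1/t$ rate. Your Volterra-type functional $L=V-\bar V-\bar V\ln(V/\bar V)+P$ makes the cross term $fVP$ cancel exactly, so that only the crude bound $f\leq\tau$ (equivalently $\e_p\geq-1$) is needed; this dispenses with the $W$ bounds and with the decay of $\e_p$, and handles the critical case uniformly. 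The price is that you get no rate: Barbalat's lemma plus the $Y$-equation force the limit but say nothing quantitative, whereas the paper extracts an explicit $O(1/t)$ bound when $\mathcal R_0=1$. Both approaches recover Lyapunov stability from the monotonicity of the respective functionals.
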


\begin{proof}

Define $\tilde V=V-\bar V.$
The stability of the DFE in norm $|V|+\|u\|_1=|V|+|P|$ is ensured by the Lyapunov functional
\[\f{d}{dt}\biggl(\bar VP(t)+\f{\tilde V^2(t)}{2}\biggr)=-\Bigl(\mu-f\bar V\Bigr)\bar VP-\delta\tilde V^2-\tilde V^2 fP\leq0.\]
It remains to prove the global attractivity.

\noindent{\bf First case: $\mathcal R_0<1.$}

We have
\[\f{d}{dt}\biggl(\bar VP+\f{\tilde V^2}{2}\biggr)\leq-\Bigl(\mu-f\bar V\Bigr)\bar VP-\delta\tilde V^2.\]
Since $\mathcal R_0<1$ we have $\mu>\tau\bar V$ and
\[\f{d}{dt}\biggl(\bar VP+\f{\tilde V^2}{2}\biggr)\leq-\min(\mu-\tau\bar V,2\delta)\biggl(\bar VP+\f{\tilde V^2}{2}\biggr).\]
We deduce the exponential convergence from the Gr\"onwall lemma.

\

\noindent{\bf Second case: $\mathcal R_0=1.$}

When $\mathcal R_0=1$ we only have
\[\f{d}{dt}\biggl(\bar VP+\f{\tilde V^2}{2}\biggr)
\leq-\tau\mu\Bigl(1-\f{f}{\tau}\Bigr)\bar VP-\delta\tilde V^2-f\tilde V^2P\]
so we need to be more precise and estimate the value of $1-\frac f\tau.$
Using System~\eqref{eq:systODE2} we have
\[1-\f{f}{\tau}=\f{(1+\e_p)\omega M_p W^{k(p-1)}P}{1+(1+\e_p)\omega M_p W^{k(p-1)}P}.\]
From Lemma~\ref{lm:eps_bound} we can ensure the existence of a time $t_0\geq0$ such that $|\e_p(t)|\leq\frac12$ for all $t\geq t_0.$
Then using Lemma~\ref{lm:uniform_bounds} we get that for all $t> t_0$
\begin{align*}\f{d}{dt}\biggl(\bar VP+\f{\tilde V^2}{2}\biggr)&
\leq-\f{\tau\mu\,\omega M_p K_1}{2+3\omega M_p K_2K_0}\bar VP^2-\f{\delta}{K_0^2}\tilde V^4-\tau\tilde V^2P\\
&\leq-\min\left\{\f{\tau\mu\,\omega M_p K_1}{2+3\omega M_pK_2K_0}\bar V^{-1},\f{4\delta}{K_0^2},\tau\bar V^{-1}\right\}\Bigl(\bar VP+\f{\tilde V^2}{2}\Bigr)^2.
\end{align*}
After integration this gives for $t\geq t_0$
\[\bar VP+\f{\tilde V^2}{2}\leq\f{1}{\f{1}{\bar VP(t_0)+\tilde V^2(t_0)/2}+C t}\xrightarrow[t\to+\infty]{}0\]
where the constant $C=\min\left\{\f{\tau\mu\,\omega M_p K_1}{2+3\omega M_pK_2K_0}\bar V^{-1},\f{4\delta}{K_0^2},\tau\bar V^{-1}\right\}>0.$
 \end{proof}

\

\begin{proposition}\label{prop:R0>1}
If $\mathcal R_0>1,$ then the unique EE is locally asymptotically stable for the norm $|V|+\|u\|_X.$
\end{proposition}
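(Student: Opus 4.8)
The plan is to prove local asymptotic stability of the EE by analyzing the reduced ODE system, exploiting the fact that the perturbation term $\e_p(t)$ decays exponentially (Lemma~\ref{lm:eps_bound}) so that the asymptotically autonomous system inherits the stability of its limiting autonomous system. Concretely, I would work with formulation~\eqref{eq:systODE2} in the variables $(V,W,P)$, whose steady state corresponding to the EE is $(V_\infty,1,P_\infty)$ with $P_\infty=Q_\infty$ (since $W_\infty=1$ forces $P=Q$ and $u_\infty=Q_\infty\U$ gives $P_\infty=\int xu_\infty=Q_\infty$). The first step is to linearize the limiting autonomous system (the one obtained by setting $\e_p\equiv0$) about $(V_\infty,1,P_\infty)$ and check that the Jacobian is Hurwitz, i.e. all eigenvalues have negative real part. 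This is a $3\times3$ matrix computation: one writes out $\partial_V,\partial_W,\partial_P$ of the three right-hand sides, uses the equilibrium relations $f(P_\infty^p Q_\infty^{1-p})V_\infty=\mu$ (from~\eqref{eq:EE1}) and $\lambda=\delta V_\infty+\mu P_\infty$, and then either applies the Routh--Hurwitz criterion to the characteristic polynomial or finds a clever structure. I expect one eigenvalue direction to be ``fast and obviously stable'' (roughly the $W$-equation, which near $W=1$ looks like $\dot W\approx\gamma(\mu W - \mu W)$ plus lower order, so care is needed there) and the remaining $2\times2$ block, essentially in the $(V,P)$ or $(Y,P)$ variables, to be stable by a sign/trace-determinant argument analogous to the $\mathcal R_0<1$ case.

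The second step is to transfer the stability from the limiting autonomous system to the actual nonautonomous system~\eqref{eq:systODE2}. Here I would invoke the standard theory of asymptotically autonomous differential equations, or more elementarily: write the system as $\dot z = A(z-z_\infty) + N(z-z_\infty) + R(t)$ where $A$ is the Hurwitz linearization, $N$ is the quadratic-and-higher nonlinearity, and the remainder $R(t)$ comes entirely from the $\e_p(t)$ terms and satisfies $|R(t)|\le C|\e_p(t)|(\text{bounded stuff})\le C'e^{-ah(t)}$ by Lemma~\ref{lm:eps_bound} together with the uniform bounds of Lemma~\ref{lm:uniform_bounds}. Since $h(t)\to+\infty$, $R(t)$ is integrable-at-infinity (indeed exponentially small once $h$ is shown to grow at least linearly eventually — which follows because $W$ is bounded below by $K_1^{1/(k(p-1))}$ away from zero, hence $\dot h = W \ge c>0$). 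A Hurwitz linear part plus a small quadratic nonlinearity plus an $L^1$ forcing gives local exponential stability of $z_\infty$ by a routine Gr\"onwall/variation-of-constants argument: choose a neighborhood where $|N(\xi)|\le \tfrac12 c|\xi|$ relative to the decay rate of $e^{tA}$, then close the estimate.

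The third step is to translate back to the PDE norm $|V|+\|u\|_X$. One has $u(t,x) = W^{-k}(t)v(h(t), W^{-k}(t)x)e^{\mu(h(t)-t)}$, and $\|u(t,\cdot)-u_\infty\|_X$ must be controlled by the ODE-variable distances $|V-V_\infty|+|W-1|+|P-P_\infty|$ plus $\|v(h(t),\cdot)-\varrho_0\U\|_X$, the latter being exponentially small by Theorem~\ref{th:MS}. This requires showing that the self-similar change of variables, as a map, is Lipschitz near the equilibrium in the relevant norms — i.e. that a small change in $W$ and in the ``profile distance'' $\|v-\varrho_0\U\|_X$ produces a small change in $\|u-u_\infty\|_X$; this uses $u_\infty = \varrho_0\U$ when $W=1$ (note $\varrho_0$ and $Q_\infty$ coincide at equilibrium) and smooth dependence of $\U(W^{-k}x)$ on $W$, controlled using that $\U\in L^1(x^\alpha dx)$ for all $\alpha\ge0$ so moments up to order $r$ are finite.

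The main obstacle I anticipate is the linearization at $W=1$: because the $W$-equation has the form $\dot W = \gamma W(fV-\mu W)$ and at equilibrium $fV_\infty=\mu$, the naive $\partial_W$ contribution is $\gamma(fV_\infty - 2\mu\cdot 1) = -\gamma\mu<0$, which is actually fine — but one must be careful that the coupling entries don't destroy the Hurwitz property, and in particular the dependence of $f=f(\e_p;W^{k(p-1)}P)$ on $W$ through the exponent $k(p-1)$ means the sign of that coupling depends on whether $p\gtrless1$. I would handle this by checking Routh--Hurwitz coefficients $a_1,a_2,a_3>0$ and $a_1a_2>a_3$ directly, and I expect that the equilibrium identities conspire to make all of these hold regardless of the sign of $p-1$ (as they must, since local stability is claimed for all $p\ge0$ here, unlike the global result). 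A secondary technical point is making the passage back to $\|\cdot\|_X$ rigorous rather than formal, since $X$ involves the weighted norm $\|\cdot\|_r$ with $r>1$ and the change of variables rescales both the mass and the moments.
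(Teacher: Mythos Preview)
Your plan is essentially the paper's: linearize the reduced ODE system at the EE, verify the Jacobian is Hurwitz via Routh--Hurwitz, handle the $\e_p$ perturbation, and translate back to $\|\cdot\|_X$. The paper works in $(V,W,Q)$ (System~\eqref{eq:systODE1}) rather than your $(V,W,P)$, a cosmetic difference since $P=W^kQ$ is a diffeomorphism near $W=1$.

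The one substantive simplification you overlook is in the perturbation step. The paper avoids the asymptotically-autonomous/variation-of-constants machinery entirely by noting that Lemma~\ref{lm:eps_bound} gives not merely $\e_p(t)\to0$ but the \emph{uniform} bound
\[
\|\e_p\|_\infty\le C\|\varrho_0^{-1}u_0-\U\|_X\le \frac{2C}{Q_\infty}\|u_0-u_\infty\|_X,
\]
so that for $u_0$ close to $u_\infty$ the full nonautonomous system is a uniformly small perturbation of the autonomous one, and stability transfers directly by continuity of $f$. No $L^1$-forcing estimate and no eventual-linear-growth-of-$h$ argument are needed. Your route works too, but it is more elaborate, and for \emph{stability} (as opposed to attraction) the crucial fact is really this uniform smallness of $\e_p$ rather than its integrability.

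One small slip in your Step~3: $\varrho_0$ and $Q_\infty$ do not coincide unless $u_0=u_\infty$ exactly; they differ by at most $\|u_0-u_\infty\|_1\le\|u_0-u_\infty\|_X$, and it is this inequality (together with $W_0=1$) that the paper uses to close the loop between the ODE variables and the $X$-norm.
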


\begin{proof}
We want to prove
\begin{equation}\label{stability}
\forall\epsilon>0,\ \exists\eta>0,\quad |V_0-V_\infty|+\|u_0-u_\infty\|_X<\eta\ \implies\ \forall t\geq0,\ |V(t)-V_\infty|+\|u(t,\cdot)-u_\infty\|_X<\epsilon.
\end{equation}

\medskip

\noindent{\bf Step \#1.} We start from the homogeneous form of Equation~\eqref{eq:systODE1} (obtained by replacing $\e_p$ by $0$) which writes
\begin{equation}\label{eq:ODE1_hom}\left\{\begin{array}{rcl}
\dot V&=&\dis\lambda-V\Bigl(\delta+f\bigl(W^{kp}Q\bigr)W^kQ\Bigr),
\vspace{.2cm}\\
\dot W&=&\dis\gamma W\Bigl(f\bigl(W^{kp}Q\bigr)V-\mu W\Bigr),
\vspace{.2cm}\\
\dot Q&=&\dis \mu Q\bigl( W-1\bigr).
\end{array}\right.\end{equation}
We easily check from Equation~\eqref{eq:ODE_EE} that $(V_\infty,1,Q_\infty)$ is the unique equilibrium of System~\eqref{eq:ODE1_hom}.
First we prove the linear stability of this equilibrium by using the Routh-Hurwitz criterion.
The Jacobian of System~\eqref{eq:ODE1_hom} about $(V_\infty,1,Q_\infty)$ is
\[Jac_{eq}=\left(\begin{array}{ccc}
-\delta-Qf(Q) & -kVQ\bigl(pQ f'(Q)+f(Q) \bigr) &  -V\bigl(Qf'(Q)+ f(Q)\bigr)\\
\gamma f(Q) & pQf'(Q)V-\gamma\mu &\gamma f'(Q)V\\
0 & \mu Q & 0
\end{array}\right).\]
where we have skipped the indices $_\infty$ for the sake of clarity.
To use the Routh-Hurwitz criterion, we compute the trace
\[T=-\delta - \gamma\mu -Qf(Q) +pVQf'(Q),\]
the determinant
\[D=\gamma\mu VQ\bigl(\delta f'(Q)-f^2(Q)\bigr),\]
and the sum of the three $2\times2$ principal minors
\[M=\gamma\mu\bigl(\delta+Qf(Q)-VQf'(Q)\bigr)-p\delta VQf'(Q)+VQf^2(Q).\]
We have $T<0,$ $D<0,$ and from
\[T<-\delta-\gamma\mu\qquad \text{and}\qquad M>-\gamma\mu VQf'(Q)+VQf^2(Q)\]
we obtain that $MT<D.$
By the Routh-Hurwitz criterion, we deduce that the steady state $(V_\infty,1,Q_\infty)$ is locally stable for Equation~\eqref{eq:ODE1_hom}.

\medskip

\noindent{\bf Step \#2.} By continuity of the function $f,$ the steady-state $(V_\infty,1,Q_\infty)$ is also stable for System~\eqref{eq:systODE1} provided that $\|\e_p\|_\infty$ is small enough, {\it i.e.}
\begin{equation}\label{stability1}\begin{array}{rl}
\forall\epsilon,\ \exists\eta>0,&\quad |V_0-V_\infty|+|W_0-1|+|Q_0-Q_\infty|+\|\e_p\|_\infty<\eta
\vspace{2mm}\\
&\hspace{20mm}\implies\ \forall t\geq0,\ |V(t)-V_\infty|+|W(t)-1|+|Q(t)-Q_\infty|<\epsilon.
\end{array}\end{equation}
We would like to replace $|W_0-1|+|Q_0-Q_\infty|+\|\e_p\|_\infty$ in~\eqref{stability1} by $\|u_0-u_\infty\|_X.$
From our choice of $W_0,$ we have $|W_0-1|=0.$
For $|Q_0-Q_\infty|$ we have $Q_0=\varrho_0$ and
\begin{equation}\label{eq:Q0_Qinf}
|\varrho_0-Q_\infty|=\left|\int_0^\infty(u_0(x)-u_\infty(x))x\,dx\right|\leq\|u_0-u_\infty\|_X.
\end{equation}
For the last term $\|\e_p\|_\infty$ we know from Lemma~\ref{lm:eps_bound} that
\[\|\e_p\|_\infty\leq M_p^{-1}\|\varrho_0^{-1}u_0-\U\|_X.\]
But using~\eqref{eq:Q0_Qinf} we also have
\begin{align*}
\|\varrho_0^{-1}u_0-\U\|_X&=\frac1{Q_\infty}\left\|\frac{Q_\infty}{\varrho_0}u_0-u_\infty\right\|_X\\
&\leq\frac1{Q_\infty}\Bigl(|Q_\infty-\varrho_0|+\|u_0-u_\infty\|_X\Bigr)\\
&\leq\f2{Q_\infty}\|u_0-u_\infty\|_X.
\end{align*}
At this stage we have proved that for all $\epsilon>0$ there exists $\eta>0$ such that
\begin{equation}\label{stability2}
|V_0-V_\infty|+\|u_0-u_\infty\|_X<\eta\ \implies\ \forall t\geq0,\ |V(t)-V_\infty|+|W(t)-1|+|Q(t)-Q_\infty|<\epsilon.
\end{equation}

\medskip

\noindent{\bf Step \#3.} It remains to deduce \eqref{stability} from~\eqref{stability2}.
We write
\[\|u(t,\cdot)-u_\infty\|_X\leq\|u(t,\cdot)-Q(t)W^{-k}(t)\U(W^{-k}(t)\cdot)\|_X+\|Q(t)W^{-k}(t)\U(W^{-k}(t)\cdot)-Q_\infty\U\|_X.\]
For the first term we have
\begin{align}
\|u(t,\cdot)-Q(t)W^{-k}(t)\U(W^{-k}(t)\cdot)\|_X&=Q(t)\int_0^\infty|\varrho_0^{-1}v(h(t),x)-\U(x)|(1+W^{kr}(t)x^r)\,dx\nonumber\\
&\leq \bigl(Q_\infty+|Q-Q_\infty|\bigr)\bigl(1+|W-1|\bigr)^{kr}\|\varrho_0^{-1}u_0-\U\|_X\nonumber\\
& \leq 2\bigl(1+\bigl|\frac{Q}{Q_\infty}-1\bigr|\bigr)\bigl(1+|W-1|\bigr)^{kr}\|u_0-u_\infty\|_X.\label{stability3}
\end{align}
For the second term we have by dominated convergence that
\begin{equation}\label{stability4}
\forall\epsilon>0,\ \exists\eta>0,\ \quad|W-1|+|Q-Q_\infty|<\eta\ \implies\ \|QW^{-k}\U(W^{-k}\cdot)-Q_\infty\U\|_X<\epsilon.
\end{equation}
Combining~\eqref{stability2}, \eqref{stability3} and~\eqref{stability4}, we obtain~\eqref{stability} and the proposition is proved.

\end{proof}

\

\begin{proposition}\label{prop:persistence}
If $\mathcal R_0>1$, then the trajectories cannot approach the DFE in the sense that
\[\liminf_{t\to+\infty}\int_0^\infty xu(t,x)\,dx>0.\]
\end{proposition}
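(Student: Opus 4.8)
The plan is to work with the ODE system~\eqref{eq:systODE2} in the variables $(V,W,P)$, recalling that $P(t)=\int_0^\infty xu(t,x)\,dx$, and to argue by contradiction. Suppose $\liminf_{t\to+\infty}P(t)=0$. By Lemma~\ref{lm:uniform_bounds} we already know that $V$ is bounded below by a positive constant for large time and that $W^{k(p-1)}(t)$ stays in a compact interval $[K_1,K_2]$; moreover $\e_p(t)\to0$ by Lemma~\ref{lm:eps_bound}. Since $\mathcal R_0>1$ we have $\bar V\tau>\mu$, and more precisely $\liminf_{t\to+\infty}V(t)\geq\lambda/(\delta+\tau K_0)$, but this crude bound is not quite enough; the point will be that when $P$ is small the $\dot V$ equation forces $V$ to be close to $\bar V=\lambda/\delta$, and then $f(\e_p;W^{k(p-1)}P)V-\mu>0$ in the $\dot P$ equation, so $P$ cannot in fact return close to $0$.

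The key steps, in order: (i) Fix a small $\sigma>0$. From Lemma~\ref{lm:eps_bound}, Lemma~\ref{lm:uniform_bounds} and the continuity of $f$ at $0$, choose $P_*>0$ and $t_1\geq0$ such that for all $t\geq t_1$, whenever $P(t)\leq P_*$ one has $f(\e_p(t);W^{k(p-1)}(t)P(t))\geq\tau-\sigma$ (here we use $f(0;0)=\tau$ and the bounds on $\e_p$, $W^{k(p-1)}$, $P$). (ii) Show that if $P(t)$ stays below $P_*$ on a long interval, then $V$ becomes and remains close to $\bar V$: from $\dot V=\lambda-V(\delta+f P)\geq\lambda-V(\delta+\tau P_*)$ and the upper bound $\dot V\leq\lambda-\delta V$, a Grönwall/comparison argument gives $V(t)\geq\bar V-\sigma$ for $t$ large on that interval, provided $P_*$ is small enough that $\tau P_*\bar V<\delta\sigma/2$ or similar. (iii) Combine: on such an interval, $\dot P=P(fV-\mu)\geq P((\tau-\sigma)(\bar V-\sigma)-\mu)$, and since $\tau\bar V-\mu>0$ we may choose $\sigma$ small so that $c:=(\tau-\sigma)(\bar V-\sigma)-\mu>0$; hence $\dot P\geq cP>0$ as long as $P\leq P_*$, so once $P$ is below $P_*$ (and $t$ past the relevant threshold) it grows exponentially until it exits the region $\{P\leq P_*\}$. (iv) This means $P$ cannot stay arbitrarily small: there is $m>0$ (roughly $m\sim P_*e^{-cT_0}$ for a fixed transient time $T_0$ absorbing the wait for $V$ to approach $\bar V$) with $P(t)\geq m$ for all large $t$, contradicting $\liminf P=0$. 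More carefully, one shows the set $\{t:P(t)<m\}$ is bounded: each time $P$ dips below $P_*$ it must immediately start increasing, so it cannot linger near $0$.

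The main obstacle is the interplay in step (ii) between the timescales: $V$ relaxes to near $\bar V$ only after a transient of length $\sim1/\delta$, and during that transient $P$ might still be decreasing if $fV<\mu$ momentarily; one must check that the exponential growth rate $c>0$ available once $V$ is close to $\bar V$ dominates any bounded amount of prior decay, so that the infimum of $P$ over large times is bounded away from $0$. This is handled by noting $\dot P\geq P(fV-\mu)\geq -\mu P$ always (since $f,V\geq0$), so $P$ decays at most like $e^{-\mu t}$; pairing this a priori lower bound on the decay with the fact that $V(t)\to$ neighbourhood of $\bar V$ within a fixed time once $P$ is trapped below $P_*$ closes the argument. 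An alternative, perhaps cleaner, route is to use the $(Y,Q,P)$ system~\eqref{eq:systODE3}: if $P\to0$ along a subsequence then $Y\to\bar V$ (from $\dot Y=\lambda-\delta Y+(\delta-\mu)P$), forcing $Y-P\to\bar V$, and then the $\dot P$ equation gives $\dot P/P\to\tau\bar V-\mu>0$, which is incompatible with $\liminf P=0$; I would present whichever of these makes the quantifiers on "along a subsequence'' versus "for all large $t$'' simplest, likely the contradiction-by-exponential-growth version sketched above.
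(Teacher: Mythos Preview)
Your argument is correct and is a standard uniform-persistence proof: trap $P$ below a small threshold $P_*$, use the $\dot V$ equation to force $V$ within $\sigma$ of $\bar V$ after a fixed waiting time $T_0$ (uniform in the entry time, since $V\leq K_0$), bound the interim decay of $P$ via $\dot P\geq-\mu P$, and then use $\dot P\geq cP$ with $c=(\tau-\sigma)(\bar V-\sigma)-\mu>0$ to eject $P$; the bookkeeping you flag yields $\liminf P\geq P_*e^{-\mu T_0}>0$. The paper takes a different route, borrowed from~\cite{CL2}. After disposing of the (ultimately impossible) case $V(t)\geq\bar V$ for all $t$, it observes that once $V(t_0)<\bar V$ one has $\tilde V:=\bar V-V>0$ for all $t\geq t_0$, and introduces the auxiliary variable $R=P\tilde V^{-\alpha}$ for $\alpha$ large enough that $\eta:=\alpha\delta-\mu>0$. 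A direct computation gives $\dot R\geq R\bigl(\eta-\alpha\tau\bar V R^{1/\alpha}P^{1-1/\alpha}\bigr)$, whence $\liminf R>0$; feeding this back yields $\dot P\geq P\,\frac{f}{\tau}\bigl(\theta-C\,P^{1/\alpha}\bigr)$ with $\theta=\tau\bar V-\mu>0$, hence $\liminf P>0$. The paper's method proceeds entirely through coupled differential inequalities and produces explicit algebraic lower bounds without tracking entry and exit times of sublevel sets; your approach is more dynamical and arguably more transparent, avoiding the auxiliary $R$ and the tuning of $\alpha$. Either is acceptable here. Your alternative via the $(Y,Q,P)$ system would indeed run into the subsequence-versus-whole-tail difficulty you identified, so the version you sketched in detail is the cleaner choice.
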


\begin{proof}
We are in the case $\mathcal R_0>1$ so $\theta:=\tau\bar V-\mu>0.$

\noindent{\bf First case: $\forall t,\ V(t)\geq\bar V.$}
Using System~\eqref{eq:systODE2}, Lemma~\ref{lm:uniform_bounds} and Lemma~\eqref{lm:eps_bound} we have
\begin{align*}
\f{d}{dt}P&=P\bigl(f(\e_p;W^{k(p-1)}P)V-\mu\bigr)\\
&\geq P\bigl((f(\e_p;W^{k(p-1)}P)-\tau)V+\tau\bar V-\mu\bigr)\\
&=P\biggl(-\f{\tau\omega M_p(1+\e_p)W^{k(p-1)}P}{1+\omega M_p(1+\e_p)W^{k(p-1)}P}V+\theta\biggr)\\
&\geq P\biggl(-\tau\omega M_p(1+C\|\varrho_0^{-1}u_0-\U\|)K_1 K_0 P+\theta\biggr)
\end{align*}
and we deduce that
\[\liminf_{t\to+\infty} P\geq\f{\theta}{\tau\omega M_p(1+C\|\varrho_0^{-1}u_0-\U\|)K_1K_0}>0.\]
Remark that this case cannot hold since the positivity of the $\liminf P$ together with the equation on $V$ implies that $V$ becomes lower than $\bar V$ in finite time.
So we are always in the second case.

\

\noindent{\bf Second case: $\exists t_0\geq0,\ V(t_0)<\bar V.$}
Define the positive function $\tilde V(t)=\bar V-V(t)\ (\forall t\geq t_0,\ \tilde V(t)>0).$
As in~\cite{CL2} we compute, for $\alpha>0$ to be chosen later,
\[\f{d}{dt}\biggl(\f{P}{\tilde V^\alpha}\biggr)\geq\f{P}{\tilde V^\alpha}\bigl((\bar V-\tilde V)f-\mu\bigr)-\alpha\f{P}{\tilde V^\alpha}\Bigl(-\delta+\tau\bar V\f{P}{\tilde V}\Bigr).\]
We choose $\alpha$ large enough so that $\eta:=\alpha\delta-\mu>0.$
Denoting $R=P\tilde V^{-\alpha}$ we have
\[\dot R\geq R(\eta-\alpha\tau\bar V R^{1/\alpha}P^{1-1/\alpha})\]
and, choosing $\alpha\geq1,$
\begin{align*}\dot P&=P\f{f}{\tau}\biggl(\tau\bar V-\mu+\mu\Bigl(1-\f{\tau}{f}\Bigr)-\tau\Bigl(\f{P}{R}\Bigr)^{1/\alpha}\biggr)\\
&\geq P\f{f}{\tau}\biggl(\theta-\mu\omega M_pK_2P-\tau\Bigl(\f{P}{R}\Bigr)^{1/\alpha}\biggr)\\
&\geq P\f{f}{\tau}\biggl(\theta-\Bigl(\mu\omega M_pK_2K_0^{\f{\alpha-1}{\alpha}}+\f{\tau}{R^{\f1\alpha}}\Bigr)P^{1/\alpha}\biggr).
\end{align*}
The first inequality  tells us that
\[\underline R:=\liminf_{t\to+\infty} R\geq\biggl(\f{\eta}{\alpha\tau\bar V K_0^{1-1/\alpha}}\biggr)^\alpha>0.\]
Then the second inequality ensures that
\[\liminf_{t\to+\infty}P\geq\left(\f{\theta}{\mu\omega M_pK_2K_0^{\f{\alpha-1}{\alpha}}+\tau\underline R^{-\f1\alpha}}\right)^\alpha>0.\]
\end{proof}

\begin{proposition}\label{prop:p>=1}
In the case when $\mathcal R_0>1$ and additionnaly $p\geq1$ and $\delta\geq\mu,$ the EE is globally asymptotically stable for the norm $|V|+\|u\|_X.$
\end{proposition}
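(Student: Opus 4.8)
The plan is to prove global attractivity of the EE for system~\eqref{eq:systODE3} (equivalently~\eqref{eq:systODE2}) and then transfer it back to the PDE as in Step~\#3 of Proposition~\ref{prop:R0>1}. Since local stability is already established (Proposition~\ref{prop:R0>1}) and the $\e_p$ term vanishes exponentially (Lemma~\ref{lm:eps_bound}), it suffices to show that every trajectory of~\eqref{eq:systODE3} with $P_0>0$ satisfies $(Y(t),Q(t),P(t))\to(V_\infty+Q_\infty,Q_\infty,Q_\infty)$; combining this with local stability gives global \emph{asymptotic} stability. From Proposition~\ref{prop:persistence} we already know $\liminf P>0$, and from Lemma~\ref{lm:uniform_bounds} all variables are bounded with $W^{k(p-1)}$ bounded away from $0$ and $\infty$; so the trajectory eventually lives in a compact region away from the faces $\{P=0\}$.

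The heart of the argument is a Lyapunov function for the \emph{limiting} autonomous system obtained by setting $\e_p=0$ in~\eqref{eq:systODE3}. I would try the classical Goh--Volterra type candidate adapted to this structure,
\begin{equation*}
L=\Bigl(Y-Y_\infty-Y_\infty\ln\tfrac{Y}{Y_\infty}\Bigr)+c_1\Bigl(P-P_\infty-P_\infty\ln\tfrac{P}{P_\infty}\Bigr)+c_2\Bigl(Q-Q_\infty-Q_\infty\ln\tfrac{Q}{Q_\infty}\Bigr),
\end{equation*}
with positive weights $c_1,c_2$ to be tuned, recalling $Y_\infty=V_\infty+P_\infty$, $P_\infty=Q_\infty$, and the equilibrium relations $\lambda=\delta Y_\infty+(\mu-\delta)P_\infty$ and $f(P_\infty^pQ_\infty^{1-p})V_\infty=\mu$. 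Differentiating along~\eqref{eq:systODE3} (with $\e_p=0$), the $\dot Y$ term contributes $-\delta(Y-Y_\infty)^2/Y+(\delta-\mu)(\cdot)$, the $\dot Q$ term contributes a factor involving $P^\gamma Q^{-\gamma}-1$ which has the sign of $P-Q$, and the $\dot P$ term contributes the incidence difference $f(P^pQ^{1-p})(Y-P)-\mu$. Here the hypotheses $\delta\ge\mu$ and $p\ge1$ should be exactly what is needed: $\delta\ge\mu$ makes the cross term from $\dot Y$ harmless (or beneficial), and $p\ge1$ ensures the monotonicity direction of $f(P^pQ^{1-p})$ in $P$ and $Q$ cooperates rather than fights the signs coming from the $\ln$ terms. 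I expect that with a judicious choice of $c_1,c_2$ one gets $\dot L\le 0$ with equality only at the equilibrium, at least on the invariant region $\{P\le Y\}$ (which holds since $V=Y-P\ge0$).

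To handle the genuine non-autonomous system~\eqref{eq:systODE3} I would use the asymptotic-autonomy / LaSalle-type argument: since $\e_p(t)\to 0$ and is integrable against $e^{-ah(t)}$, the same $L$ satisfies $\dot L\le \omega(t)$ along the true trajectory, where $\omega(t)$ is a perturbation controlled by $|\e_p(t)|$ times bounded factors, hence $\omega\in L^1(0,\infty)$ (using $h(t)\to\infty$ at least logarithmically). Then $L(t)$ converges, $\dot L$ is integrable, and a Barbalat-type argument forces the non-positive part of $\dot L$ to zero, which pins the $\omega$-limit set down to the single equilibrium $(V_\infty+Q_\infty,Q_\infty,Q_\infty)$. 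Finally, convergence of $(V,W,Q)$ (equivalently $(Y,Q,P)$) transfers to $\|u(t,\cdot)-u_\infty\|_X\to 0$ by exactly the estimate~\eqref{stability3}--\eqref{stability4} used in Proposition~\ref{prop:R0>1}, using $v(h(t),\cdot)\to\varrho_0\U$ in $X$ and the convergence $W\to1$, $Q\to Q_\infty$.

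The main obstacle I anticipate is verifying the sign of $\dot L$: the growth--fragmentation structure enters only through the single scalar equation $\dot Q=\mu Q(P^\gamma Q^{-\gamma}-1)$ coupling $P$ and $Q$, and the incidence nonlinearity $f(P^pQ^{1-p})$ mixes $P$ and $Q$ in a way that is not a standard Lotka--Volterra term, so the weights $c_1,c_2$ must be chosen to absorb the derivative of $f$ using precisely $p\ge1$ and $\delta\ge\mu$; if a single quadratic-type $L$ does not close, a fallback is to first prove $P(t)-Q(t)\to0$ (so $W\to1$) via a separate Lyapunov/monotonicity estimate on $P/Q$ from the $\dot Q$ equation, reduce to an effectively two-dimensional limiting system in $(V,P)$, and conclude there — a two-dimensional system being amenable to Dulac/Poincar\'e--Bendixson in addition to Lyapunov methods.
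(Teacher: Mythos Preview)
Your plan is a genuinely different route from the paper's. The paper does \emph{not} build a Lyapunov function for~\eqref{eq:systODE3}; instead it observes that the sign pattern of the Jacobian of the limiting ($\e_p=0$) system~\eqref{eq:systODE3} is
\[
\begin{pmatrix}-&0&\sgn(\delta-\mu)\\0&*&+\\+&\sgn(p-1)&*\end{pmatrix},
\]
so precisely under $p\ge1$ and $\delta\ge\mu$ the off-diagonal entries are nonnegative and the system is irreducible cooperative. Smith's monotone dynamics theory then forces every bounded trajectory to converge to an equilibrium; Proposition~\ref{prop:persistence} rules out the DFE, so the limit is the EE. The passage from the limiting system to the true non-autonomous one is handled by a general asymptotic-autonomy lemma (Lemma~4.2 of~\cite{PG2}) rather than an $L^1$-perturbed Lyapunov/Barbalat argument.

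What each approach buys: the paper's argument explains \emph{structurally} why the two hypotheses are exactly the right ones (they are the sign conditions making the system cooperative) and avoids any computation of $\dot L$. Your Goh--Volterra proposal is natural for incidence-type models and, if it closes, would be more self-contained; but the step you flag as the main obstacle---choosing $c_1,c_2$ so that the mixed $f(P^pQ^{1-p})$ and $P^\gamma Q^{-\gamma}$ terms combine to a nonpositive expression---is genuinely nontrivial here and you have not carried it out, so as it stands your proof is a plausible program rather than a verified argument. One small point to tighten if you pursue it: your claim that $\omega\in L^1(0,\infty)$ relies on $h$ growing faster than logarithmically; this is true for $p\ge1$ because the proof of Lemma~\ref{lm:uniform_bounds} gives $\liminf W>0$ (hence $h$ eventually linear), but your parenthetical ``at least logarithmically'' would not by itself suffice.
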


\begin{proof}
Consider the homogeneous form of System~\eqref{eq:systODE3} (by replacing $\e_p$ by $0$).
The matrix of partial derivatives has the sign pattern
\[\left[\begin{array}{ccc}
-&0&\sgn(\delta-\mu)\\
0&*&+\\
+&\sgn(p-1)&*
\end{array}\right].\]
In the case $p\geq1$ and $\delta\geq\mu,$ this indicates an irreducible cooperative system.
Then by Theorems 2.3.2, 4.1.1 and 4.1.2 on respective pages 18, 56 and 57 of~\cite{Smith}, the homogeneous form of System~\eqref{eq:systODE3} exhibits monotone dynamical flow and solutions must approach an equilibrium.
From Proposition~\ref{prop:persistence} the trajectories cannot approach the DFE when $\mathcal R_0>1,$ so they necessarily approach the EE.
Using the stability result of Proposition~\ref{prop:R0>1} we deduce the global asymptotic stability of the EE.

To conclude to the same result for the original System~\eqref{eq:systODE3}, we use the fact that $\e_p(t)\to0$ when $t\to+\infty$ and Lemma~4.2 in~\cite{PG2}.
\end{proof}

\

\section{Conclusion}

We have considered a prion model with less terms than in~\cite{Greer2}, but with more general coefficients.
Compared to the results in~\cite{Greer2} we have proved the global stability of the DFE in the critical case $R_0=1$ and the global asymptotic stability of the EE when the system is cooperative.

The results in Theorem~\ref{th:summary} remain valid for more general incidence functions $f$ provided that they are decreasing.
Indeed it has been proved in~\cite{PG2} that for increasing functions $f,$ periodic solutions can exist.
This indicates that Equation~\eqref{eq:prion_gen_inc} can exhibit various behaviors and their classification in the general case is still an open question.

\vspace{1cm}

\noindent{\bf Aknowledgment}

This work was supported by the french ANR project ``KIBORD'', ANR-13-BS01-0004-01.

\

%
%

\end{document}